\documentclass[12pt, twoside, leqno]{article}
\usepackage{amsmath,amsthm}
\usepackage{amssymb}
\usepackage{enumitem}
\usepackage{graphicx}

\usepackage[T1]{fontenc}
\usepackage[mathscr]{euscript}
\pagestyle{myheadings}
\markboth{K. Grzelakowski}{Number of Triple Points of Complete Intersection Calabi-Yau Threefolds}

\newtheorem{theorem}{Theorem}[section]

\newtheorem{lemma}[theorem]{Lemma}
\newtheorem{proposition}[theorem]{Proposition}

\newtheorem{fact}[theorem]{Fact}

\theoremstyle{definition}

\newtheorem{remark}[theorem]{Remark}

\numberwithin{equation}{section}

\frenchspacing

\textwidth=13.5cm
\textheight=23cm
\parindent=16pt
\oddsidemargin=-0.5cm
\evensidemargin=-0.5cm
\topmargin=-0.5cm

\begin{document}

\baselineskip=17pt

%%%%%%%%%%%%%%%%

\title{Number of triple points on complete intersection Calabi-Yau threefolds}

\author{Kacper Grzelakowski\\
Wydzia\l \, Matematyki i Informatyki\\
Uniwerstytet \L\'odzki\\
Banacha 22\\
90-238 \L \'odz, Poland\\
E-mail: kacper.grzelakowski@wmii.uni.lodz.pl}

\date{}

\maketitle

\renewcommand{\thefootnote}{}

\footnote{2020 \emph{Mathematics Subject Classification}:  14J17;  14J30.}

\footnote{\emph{Key words and phrases}: intersection theory, Calabi-Yau threefolds.}

\renewcommand{\thefootnote}{\arabic{footnote}}
\setcounter{footnote}{0}

%%%%%%%%

\begin{abstract}
We discuss bounds for the number of ordinary triple points on complete intersection Calabi-Yau threefolds in projective spaces and for Calabi-Yau threefolds in weighted projective spaces. In particular, we show that in $\mathbb{P}^5$ the intersection of a quadric and a quartic cannot have more than $10$ ordinary triple points. We provide examples of complete intersection Calabi-Yau threefolds with multiple triple points. We obtain the exact bound for a sextic hypersurface in $\mathbb{P}[1:1:1:1:2]$, which is $10$. We also discuss Calabi-Yau threefolds that cannot admit triple points.
\end{abstract}

\section{Introduction}

By an \emph{ordinary triple point} on a threefold $X$ we mean an isolated singularity $O$ such that the tangent cone to $X$ at $O$ is a cone over a smooth degree three surface. In this paper we consider bounds on the number of ordinary triple points on Calabi-Yau threefolds. Our interest was sparked off by the results of Kloosterman and Rams regarding the possible number of triple points on a Calabi-Yau quintic threefold in $\mathbb{P}^4$ \cite{KR}. They have proven that a quintic threefold with a reducible hyperplane section cannot have more than $10$ ordinary triple points as the only singularities and constructed an example where this bound is reached. The question whether or not a quintic threefold can have $11$ ordinary triple points remains open, but it is known that the number $12$ is impossible.

Our interest in Calabi-Yau threefolds admitting ordinary triple points as the only singularities comes from the fact that the resolution of such singularities is canonical - it does not modify the canonical class of a variety and leads to new smooth Calabi-Yau threefolds. For double points we can obtain such resolution only if there exists a so-called small resolution of singularities, that is, replacing a point by a $\mathbb{P}^1$. There already exists an extensive literature on the number of double points on Calabi-Yau threefolds, for example (\cite{GP}\cite{GK}\cite{DVS}), yet the exact bounds are still difficult to obtain. For instance, in the case of a quintic threefold the best result is given by van Straten in \cite{DVS} with $130$ double points and it is not known whether or not it is indeed the least upper bound. The case of ordinary triple points should be simpler, yet it is much less discussed.

The threefolds we consider are a natural next step after a quintic hypersurface in $\mathbb{P}^4$. We analyse Calabi-Yau threefolds that are complete intersection manifolds in projective spaces, namely $X_{2,4}, X_{3,3}\subset\mathbb{P}^5$, $X_{2,2,3}\subset \mathbb{P}^6$ and $X_{2,2,2,2}\subset\mathbb{P}^7$ where the subscripts are the degrees of the hypersurfaces we intersect. We also provide a bound for the number of ordinary triple points for a sextic in weighted projective space. Some of the results are quite straightforward but we have not been able to locate them in the existing literature. Numerical bounds are presented in Table 1. We include there the results of Rams and Kloosterman regarding the quintic.

We always assume that the threefold in question has only ordinary triple points (OTPs) as singularities. Let $\mu_3(X)$ denote the maximal number of ordinary triple points that $X$ can have under this condition. We drop $X$ if the context is clear. The analysis of $\mu_3(X_{2,4})$ reduces to the analysis of a quintic threefold as we perform a projection from one of the triple points of $X_{2,4}$ to obtain a quintic hypersurface in $\mathbb{P}^4$. In doing so we obtain a particular quintic, namely one containing a cubic surface and also admitting $24$ double points. Because of those additional singularities we are not able to use all the methods of \cite{KR} to analyse this quintic, yet we still make use of the Varchenko bound to find an upper bound on $\mu_3(X_{2,4})$ which is $10$. We have only been able to construct $X_{2,4}$ admitting $7$ ordinary triple points as singularities and so the problem to determine the exact bound remains open. 

The case of $X_{3,3}$ proves to be more complex. In our discussion we assume that all triple points of $X_{3,3}$ are inherited from some cubic fourfold, that is, for each $O$ triple on $X_{3,3}$ there exists a cubic fourfold $X_O$ such that $X_{3,3}\subset X_O$ and $O$ is triple on $X_O$. In that case we find the exact bound $\mu_3$ for a complete intersection threefold ${X_{3,3}}$, which is $9$, and we provide a construction of a $X_{3,3}$ admitting that many. The points lie in an interesting configuration as they come in three collinear triples. We discuss the geometry of a resolution of singularities of a threefold $X_{3,3}$ containing only inherited triple points, in particular the fact that it can be realised as a double cover of a cubic threefold.

We include one example of $X_{2,2,3}$ admitting multiple ordinary triple points. 

The next part of this paper is devoted to finding $\mu_3$ of a sextic hypersurface $X_6\subset\mathbb{P}(1:1:1:1:2)$. To find it we have to look at its hyperplane section, as it turns out that all the triple points are contained in one. We use the fact that the weighted projective space $\mathbb{P}(1:1:1:1:2)$ can be realized as a Veronese embedding of $\mathbb{P}^3$ in $\mathbb{P}^9$ and $X_6$ as its trisection and thus a triple cover of $\mathbb{P}^3$. We also employ the techniques used in finding the polar bound for the possible number of singular points. 

The paper ends with a discussion of Calabi-Yau threefolds of degree $8$ and $10$ in weighted projective spaces and of the complete intersection of four quadrics in $\mathbb{P}^7$ which do not admit any ordinary triple points.

	\begin{table}[] 
\centering 
		\caption{Bound on the number of triple points on Calabi-Yau threefolds}
\begin{tabular}{rc}
			\cline{1-2}
			\multicolumn{1}{|r|}{Calabi-Yau threefold $X$}                                      & \multicolumn{1}{c|}{$\mu_3(X)$} \\ \hline
			\multicolumn{1}{|r|}{$X_{5}\subset\mathbb{P}^4$}         & \multicolumn{1}{c|}{$10$ or $11$}        \\ \hline
			\multicolumn{1}{|r|}{$X_{2,4}\subset\mathbb{P}^5$}         & \multicolumn{1}{c|}{$7\leq\dots\leq 10$}       \\ \hline
			\multicolumn{1}{|r|}{$X_{3,3}\subset\mathbb{P}^5$}         & \multicolumn{1}{c|}{$ 9\leq$}       \\ \hline
			\multicolumn{1}{|r|}{$X_{2,2,3}\subset\mathbb{P}^6$}         & \multicolumn{1}{c|}{$4 \leq$}       \\ \hline
			\multicolumn{1}{|r|}{$X_{2,2,2,2}\subset\mathbb{P}^7$}         & \multicolumn{1}{c|}{0}       \\ \hline
			\multicolumn{1}{|r|}{$X_6\subset\mathbb{P}(1:1:1:1:2)$}    & \multicolumn{1}{c|}{10}      \\ \hline
			\multicolumn{1}{|r|}{$X_{8}\subset\mathbb{P}(1:1:1:1:4)$}  & \multicolumn{1}{c|}{0}       \\ \hline
			\multicolumn{1}{|r|}{$X_{10}\subset\mathbb{P}(1:1:1:2:5)$} & \multicolumn{1}{c|}{0}       \\ \hline
			\multicolumn{1}{l}{}                                       & \multicolumn{1}{l}{}         \\
			\multicolumn{1}{l}{}                                       & \multicolumn{1}{l}{}         \\
			\multicolumn{1}{l}{}                                       & \multicolumn{1}{l}{}        
		\end{tabular}
	\end{table}

\clearpage
\section{Triple points on $X_{2,4}\subset\mathbb{P}^5$}
We consider a threefold that is an intersection of quadric and quartic fourfolds in $\mathbb{P}^5$. We use $[x:y:z:t:u:w]$ as variables.
\begin{theorem}\label{6P}
	A complete intersection threefold $X_{2,4}\subset\mathbb{P}^5$ with only ordinary triple points as singularities can contain
	at most $10$ such points.
\end{theorem}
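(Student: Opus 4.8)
The plan is to reduce the problem to the quintic case studied in \cite{KR} by projecting away from a triple point, and then to run a Varchenko-type spectral bound on the resulting quintic. If $X_{2,4}$ has no triple point there is nothing to prove, so assume $O$ is an ordinary triple point and write $X_{2,4}=Q\cap F$ with $\deg Q=2$, $\deg F=4$, both passing through $O$. Since a quadric fourfold is, for generic $Q$, smooth at $O$, the projection $\pi_O\colon\mathbb{P}^5\dashrightarrow\mathbb{P}^4$ restricts to a birational map $Q\dashrightarrow\mathbb{P}^4$ (blow up $O$, blow down the cone of lines of $Q$ through $O$). First I would use this to control $X:=X_{2,4}$: a general $2$-plane through $O$ meets $X$ in $\deg X=8$ points, three of which are absorbed at the triple point $O$, so $\pi_O$ maps $X$ birationally onto a hypersurface $Y\subset\mathbb{P}^4$ of degree $8-3=5$, a quintic threefold.

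Next I would identify the singularities of $Y$. Every ordinary triple point of $X$ other than $O$ lies away from the centre of projection and is therefore carried isomorphically to an ordinary triple point of $Y$; thus if $X$ has $n$ triple points, $Y$ acquires $n-1$ of them. Blowing up $O$ shows that the exceptional cubic surface $C$, the base of the tangent cone to the ordinary triple point, is mapped isomorphically into a hyperplane of $\mathbb{P}^4$, so $Y$ contains a cubic surface. Finally the lines of $Q$ through $O$ each meet the quartic $F$ in one residual point, and at the finitely many lines where this residual point collides with the tangent cone the projection creates an ordinary double point of $Y$; an intersection-theoretic count on the quadric cone $Q\cap T_OQ$ (a quartic condition on the two-dimensional family of these lines) produces exactly $24$ nodes. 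Hence $Y$ is a quintic threefold containing a cubic surface and carrying $24$ nodes together with $n-1$ ordinary triple points.

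The engine of the bound is the semicontinuity of the singularity spectrum, the Varchenko bound, exactly as in \cite{KR}. I would fix the length-one interval centred at $n/2=2$ and record the spectral numbers of each local type: an ordinary triple point in $\mathbb{P}^4$ is the cone over a cubic surface, with weighted-homogeneous spectrum supported at $4/3,5/3,2,7/3,8/3$ with multiplicities $1,4,6,4,1$, while a node contributes the single spectral number $2$. Summing the contributions of the $n-1$ triple points and the $24$ nodes in the chosen interval and comparing with the spectral count of the degree-five comparison singularity gives a linear inequality in $n$; carrying out the bookkeeping yields $n-1\le 9$, that is $\mu_3(X_{2,4})\le 10$.

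The main obstacle is the spectral bookkeeping in the last step. Unlike the pure triple-point situation of \cite{KR}, the $24$ nodes consume part of the Varchenko budget, so the interval has to be chosen precisely and the node contributions weighed against those of the triple points; one must also verify that the projection introduces no singularities worse than the nodes already counted, so that the spectrum of $Y$ is genuinely the sum of the local spectra listed above, and that the count of $24$ residual nodes is exact rather than an upper estimate. Once these points are secured, the inequality closes at $10$.
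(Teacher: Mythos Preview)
Your strategy is exactly the paper's: project from a triple point $O$, land on a quintic $Y\subset\mathbb{P}^4$ with $n-1$ ordinary triple points and $24$ nodes, then apply Varchenko. The paper in fact writes $X_{2,4}=V(xG_1+G_2)\cap V(xG_3+G_4)$, so the image quintic is $V(G_1G_4-G_2G_3)$ and the $24$ nodes are literally the points $G_1=G_2=G_3=G_4=0$; it also checks that $Y$ is normal, which you do not address but should, since otherwise the local spectra need not add up as claimed.

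The real gap is the spectral bookkeeping. The interval you propose, the symmetric one $(3/2,5/2)$ around $2$, does \emph{not} close the bound at $10$. In your normalisation the reference singularity $x_1^5+\dots+x_4^5$ has spectral numbers $\sum_i(a_i{+}1)/5$ with $a_i\in\{0,1,2,3\}$, and the count in $(3/2,5/2)$ is
\[
\#\{\,\textstyle\sum a_i\in\{4,5,6,7,8\}\,\}=31+40+44+40+31=186,
\]
while a triple point contributes $14$ and a node $1$ as you say. The inequality $14(n-1)+24\le 186$ gives only $n-1\le 11$, hence $n\le 12$. The paper instead takes $\alpha=2/5$ (in your shifted convention the interval is $(7/5,12/5)$), which drops the reference count to
\[
\#\{\,\textstyle\sum a_i\in\{4,5,6,7\}\,\}=31+40+44+40=155,
\]
still with $14$ per triple point and $1$ per node in that window, and then $14(n-1)+24\le 155$ forces $n-1\le 9$, i.e.\ $n\le 10$. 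So the asymmetric choice of interval is essential; your caveat that ``the interval has to be chosen precisely'' is correct, but the one you actually fixed is not the right one.
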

\begin{proof}
At an ordinary triple point the complete intersection threefold in $\mathbb{P}^5$ locally looks like the intersection of a hyperplane and a fourfould with an ordinary triple point. Let us assume the point in question in $O=[1:0:0:0:0:0]$. Let $G_i$ be a homogeneous polynomial of degree $i$ independent of the first variable $x$. Then it is easy to see that the complete intersection $X_{2,4}$ of a quadric $X_2$ and a quartic $X_4$ with respective equations $F_2:=xG_1+G_2$ and $F_4:=xG_3+G_4$ does indeed have a triple point at $O$. Note that $G_3$ needs to be a polynomial defining a smooth threefold in $\mathbb{P}^4$ for this singularity to be an ordinary one. In the next paragraph we show that for an ordinary triple point $O$ on $X_{2,4}$ we can always find $X_2$ and $X_4$ with equations of the above form.

We fix $X_2=V(F_2)=V(xG_1+G_2)$ as a quadric that defines $X_{2,4}$. Let $\hat{X}_4$ be a quartic fourfold that forms a regular sequence with $X_{2}$. Let $\hat{F}_4$ be the polynomial defining $\hat{X}_4$ and let us group the terms of this polynomial with respect to $x$, that is, $\hat{F}_4=\alpha x^4+x^3\hat{G}_1+\dots+\hat{G}_4$ with $\hat{G}_i$ a homogeneous polynomial independent of $x$ of degree $i$ and $\alpha$ a constant. We want $O$ to be a triple point, and so in particular $\hat{X}_4$ has to pass through $O$, which means $\alpha=0$. If $G_1$ defined a hyperplane different from $\hat{G}_1$ then the complete intersection of $X_2$ and $\hat{X}_4$ would be smooth at $O$ and so $G_1=\beta \hat{G}_1$ with $\beta$ being a constant. (It may happen in particular that $G_1$ is the zero polynomial but this does not cause any problems.) For the complete intersection threefold to have a triple point at $O$ we need $\hat{G}_2=\beta G_2+G_1H_1$ with $H_1$ being a degree one polynomial independent of $x$ and $\beta$ a constant. Otherwise, our intersection would have a double point at $O$. Consequently, we can write $\hat{F}_4=x^3\beta G_1+x^2(\beta G_2+G_1H_1)+x\hat{G}_3+\hat{G}_4$. We can always divide by a nonzero constant and assume $\beta=1$ (and abuse notation by leaving $H_1$ unchanged). In that case $F_4=\hat{F}_4-x^2F_2-xH_1F_2$ is the polynomial defining another fourfold $X_4$ containing $X_{2,4}$ and it is indeed of the desired form, thus proving the assertion from the previous paragraph.

From now on we assume that $X_{2,4}$ is defined by $F_2=xG_1+G_2$ and $F_4=xG_3+G_4$ as above. We can perform a projection from $O$ onto a $\mathbb{P}^4$ and obtain a quintic threefold. Indeed, let $E$ denote the exceptional divisor of a blowup of $O$. The projection is given by a linear system $|H-E|$, that is a system of strict transforms of hyperplanes passing through $O$. Since $(H-E)^3=5$, as a result of this projection we obtain a threefold of degree $5$ which we denote $X_5$. We want to show that $X_5$ is normal. Assume to the contrary that it is not. Then by \cite[p. 254]{JR} there would exist a normalization $\bar{X}_5$ with a two-dimensional ramification locus $D$ on $X_5$. Let $P$ be a point on the threefold $X_5$ and let $L_P$ be the line in $\mathbb{P}^5$ passing through $O$ that projects to $P$. Note that $P\in D$ if $L_p$ cuts $X_{2,4}$ either in two points other that $O$ or if it is tangent to $X_{2,4}$ at some point $\neq O$. We see that lines cutting $X_{2,4}$ twice or tangent to it need to be contained both in $X_2$ and $X_4$. Indeed, counting the number of intersection points (with multiplicities in case of tangency), these lines cut the fourfolds in $3$ points (in case of $X_2$) or at a triple point and $2$ additional points (in case of $X_4$). Thus they are contained in $X_{2,4}$, which means that the projection from $O$ contracts a three-dimensional cone ruled over $D$, a contradiction. 

Every triple point $O_i$ other than $O$ on $X$ yields a triple point $O'_i$ on $X_5$, thus if $\mu_3(X)=n$ then $\mu_3(X_5)=n-1$. Also note, that by the choice of equations for $X_2$ and $X_4$ we can consider $G_1G_4-G_2G_3$ to be the polynomial defining $X_5$ in $\mathbb{P}^4$ and thus obtain $24$ double points on $X_5$, namely those for which $G_1=G_2=G_3=G_4=0$. By an argument similar to \cite[2.6]{KR} we can consider Varchenko's spectral bound for $Q$. For $\alpha=2/5$ the spectrum of the fivefold point has length $155$  in the interval $(2/5, 7/5)$ whilst the spectrum of the triple point has length $14$ and the spectrum of the double point has length $1$. Thus from $155-24=131$ and $10\cdot14=140$ a quintic threefold containing $24$ double points can have at most $9$ ordinary triple points as the remaining singularities. As then $n-1=9$ we conclude that $X_{2,4}$ can contain at most $n=10$ ordinary triple points.
\end{proof}
\begin{lemma}
	There exitsts a threefold $X_{2,4}$ containing $7$ ordinary triple points as the only singularities.
	\end{lemma}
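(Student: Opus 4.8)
The plan is to construct an explicit pair of polynomials $F_2$ and $F_4$ realizing seven ordinary triple points, guided by the normal form established in Theorem \ref{6P}. Since the analysis there reduces a triple point on $X_{2,4}$ to a quintic threefold $X_5 = V(G_1G_4 - G_2G_3)$ with $24$ extra nodes, the natural strategy is to work backwards: I would seek a configuration of points in $\mathbb{P}^5$ that can simultaneously be forced to be ordinary triple points by an appropriate choice of the four auxiliary forms $G_1, G_2, G_3, G_4$. The key constraint to exploit is that at each candidate triple point $O_i$, after the change of coordinates sending $O_i$ to a fundamental point, both $F_2$ and $F_4$ must lose their linear and (for $F_4$) quadratic terms, with the leading cubic part of $F_4$ cutting out a smooth cubic surface in the projectivized tangent cone.

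First I would fix a symmetric or torus-invariant configuration of seven points, so that a single symmetry group acts on the whole setup and collapses the number of independent coefficient conditions; a configuration respecting a cyclic or dihedral symmetry permuting the coordinates is the most promising, since the vanishing conditions at one point then propagate automatically to its orbit. Next I would write $F_2$ and $F_4$ with undetermined coefficients compatible with that symmetry, and impose at each orbit representative the conditions that the point lie on $X_{2,4}$, that the Jacobian of the pair $(F_2, F_4)$ drop rank to make the point singular, and that the second-order data vanish so the multiplicity is exactly three rather than two. The smoothness of the tangent cone — equivalently, that the cubic surface defined by the leading form of $F_4$ restricted to the relevant hyperplane is nonsingular — I would check by a direct Jacobian computation at the end, since this is an open condition and generically holds once the vanishing conditions are satisfied.

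The main obstacle I expect is not writing down the equations but verifying that the resulting singularities are \emph{isolated} and that there are \emph{no other} singular points beyond the seven intended triple points; the complete intersection could easily acquire stray nodes or higher singularities along the locus where the $G_i$ interact, exactly the $24$-nodal phenomenon that appeared in the projection. To control this I would compute the full singular locus of $X_{2,4}$ via the Jacobian ideal of the pair $(F_2, F_4)$ — that is, the locus where the $2 \times 6$ matrix of partials has rank less than two — and confirm it consists of precisely the seven points. In practice this is a Gr\"obner basis computation best carried out in a computer algebra system, and the honest presentation is to exhibit the explicit polynomials and report the verification, rather than to derive them from scratch. A secondary check is that $F_2$ and $F_4$ form a regular sequence, so that $X_{2,4}$ is genuinely a threefold of the expected dimension and degree, which follows once one confirms the intersection is not contained in any common component.

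Given the difficulty of hitting all seven simultaneously by hand, the realistic route is to start from the symmetric ansatz, solve the linear and quadratic vanishing conditions at the orbit representatives to pin down most coefficients, leave a few parameters free, and then select numerical values for which a machine verifies that the singular locus is exactly the seven reduced points with smooth cubic tangent cones. I would then record the final equations explicitly in the statement of the construction so that the reader can independently confirm the claim.
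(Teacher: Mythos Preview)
Your strategy is plausible but takes a much more laborious route than the paper, and as written it is really a research plan rather than a proof: you never actually exhibit the polynomials or carry out the promised verification, so the argument remains hypothetical.

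The paper's construction is cleaner and avoids computer algebra entirely. Instead of imposing all seven triple points simultaneously on the pair $(F_2,F_4)$, it first builds a quartic fourfold $X_4\subset\mathbb{P}^5$ with six ordinary triple points in general linear position. This step is a pure dimension count: the space of quartic fourfolds has dimension $125$ and each triple point imposes $20$ linear conditions, so six of them still leave a nonempty family. The paper places these six OTPs at $[1{:}1{:}1{:}1{:}1{:}1]$ and the five coordinate points with $w=0$, then expands $\tilde{F}_4=w^3G_1+w^2G_2+wG_3+G_4$ in powers of $w$. The seventh triple point is obtained by choosing the quadric as $X_2=V(wG_1+G_2)$, which is exactly the normal form from Theorem~\ref{6P} at the remaining coordinate point $[0{:}0{:}0{:}0{:}0{:}1]$; the six triple points of $X_4$ are inherited by the complete intersection. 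That the singular locus consists of precisely these seven points, and that each tangent cone is a smooth cubic surface, is handled by a genericity remark (``for $G_i$ general enough'') rather than by an explicit Jacobian computation.

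So the key structural idea you are missing is the separation into ``six inherited from the quartic'' plus ``one added via the normal form.'' This lets the paper replace your symmetric ansatz and Gr\"obner-basis verification by two easy dimension/genericity arguments. Your approach would in principle also work and has the advantage of producing explicit, checkable equations, but it is considerably heavier and, as presented, incomplete.
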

\begin{proof}
We can easily produce a fourfold $X_4$ admitting $6$ ordinary triple points in general linear position. Indeed the space of quartic fourfolds is of dimension $125$ and a triple point imposes $20$ linear conditions. Let $X_4$ be a quadric fourfold having triple points $[1:1:1:1:1:1]$ and $[1:0:0:0:0:0],\dots, [0:0:0:0:1:0]$. Let $\tilde{F}_4(x,y,z,t,u,w)$ be the polynomial defining $X_4$. We can write $\tilde{F}_4=w^3G_1+w^2G_2+wG_3+G_4$ where $G_i$ are homogeneous degree $i$ polynomials independent of $w$ such that $V(G_1)$ and $V(G_2)$ pass through the points $[1:1:1:1:1], [1:0:0:0:0],\dots, [0:0:0:0:1]$ in $\mathbb{P}^4$. Now lets take $X_2=V(\tilde{F}_2)$ passing through these points with $\tilde{F}_2=wG_1+G_2$. For $G_i$ general enough the complete intersection $X_{2,4}$ will have an ordinary triple point at $[0:0:0:0:0:1]$ and inherit all the ordinary triple points of $X_4$ producing an $X_{2,4}$ with $7$ OTPs.
We are unaware of any $X_{2,4}$ having more than $7$ OTPs as the only singularities.
\end{proof}

\section{Triple points on $X_{3,3}\subset\mathbb{P}^5$}
We consider a degree $9$ threefold that is an intersection of two degree $3$ hypersurfaces in $\mathbb{P}^5$. As before, we assume that $X_{3,3}$ has ordinary triple points as the only singularities. Let $X_{3,3}$ be $V(F_3, G_3)$ where $F_3$ and $G_3$ are the degree $3$ polynomials defining fourfolds in $\mathbb{P}^5$. Any other cubic fourfold containing $X_{3,3}$ is an element of the pencil $\alpha F_3+\beta G_3$ for $\alpha,\beta \in\mathbb{C}$. We denote this pencil by $\mathscr{L}$ and the fourfold defined by the above equation by $X_{\alpha, \beta}$. 

In our discussion we distinguish between the triple points that we call inherited and noninherited. For the former, we assume that in the pencil of cubic fourfolds there exists a fourfold $X_O$ for which the point $O$ is triple and thus the triple point on $X_{3,3}$ can be obtained by intersecting this singular fourfold with a smooth one. The latter case is when there is no fourfold in the pencil having an (ordinary) triple point at $O$ and thus the triple point on $X_{3,3}$ is specifically obtained because of the intersection of fourfolds, and not inherited from one of them. We only analyse the first case. 

\begin{theorem}
Let $X_{3,3}$ be a complete intersection threefold in $\mathbb{P}^5$ having only ordinary triple points as singularities. Furthermore, assume that for every triple point $O_i$ on $X_{3,3}$ there exists a cubic fourfold $X_i$ containing $X_{3,3}$ such that $O_i$ is triple on $X_i$. Then $X_{3,3}$ can have at most $9$ ordinary triple points. 
\end{theorem}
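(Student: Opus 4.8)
The plan is to convert the statement into a count of cones inside the pencil $\mathscr{L}$ and then to extract that count from a projection. First I would record that, under the hypothesis, an inherited triple point is exactly a vertex of a cone in $\mathscr{L}$: if $O_i$ is an ordinary triple point of the member $X_{\alpha_i,\beta_i}=V(\alpha_i F_3+\beta_i G_3)$, then grouping the cubic by the coordinate that places $O_i$ at $[1:0:0:0:0:0]$ forces the equation to be independent of that coordinate, so $X_{\alpha_i,\beta_i}$ is a cone with vertex $O_i$ over a cubic threefold in $\mathbb{P}^4$, and that base is smooth precisely because the triple point is ordinary. A cubic fourfold that is such a cone has a unique vertex, so distinct triple points give distinct members $[\alpha_i:\beta_i]\in\mathbb{P}^1$; hence it suffices to bound the number of cone-members of $\mathscr{L}$.

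The geometric input I would exploit is that the lines joining vertices lie on $X_{3,3}$. For two vertices $O_i,O_j$, the cone $X_{\alpha_i,\beta_i}$ has vertex $O_i$ and contains $O_j$ (since $O_j\in X_{3,3}$), so $\overline{O_iO_j}$ is a ruling of that cone; by symmetry it is also a ruling of $X_{\alpha_j,\beta_j}$. As any two distinct members of $\mathscr{L}$ meet exactly in the base locus, $X_{\alpha_i,\beta_i}\cap X_{\alpha_j,\beta_j}=X_{3,3}$, and therefore $\overline{O_iO_j}\subset X_{3,3}$. This containment is what ultimately forces the extremal points into collinear triples.

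Next I would project $X_{3,3}$ from one vertex $O_1=[1:0:0:0:0:0]$. Taking $F_3=c(y,z,t,u,w)$ to be the cone with vertex $O_1$, and writing a second member as $x^2P+xQ+R$ with $P,Q,R$ of degrees $1,2,3$ in $[y:z:t:u:w]$, the base locus maps onto the smooth cubic threefold $B=V(c)\subset\mathbb{P}^4$, and over a point of $B$ the fibre is cut out by the quadratic $x^2P+xQ+R=0$ in $x$. Thus the projection realises $X_{3,3}$ as a double cover of $B$ branched along the surface $B\cap V(Q^2-4PR)$, recovering the double-cover description announced in the introduction (and one checks $K_{X}$ stays trivial, consistent with Calabi--Yau). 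A second vertex $O_j$ lies on the ruling $\overline{O_1O_j}\subset X_{3,3}$, and since an entire ruling is swept out in the cover the quadratic must degenerate identically over its image, i.e. $P=Q=R=0$ there. Hence the images $\bar O_j$ lie in $V(c,P,Q,R)\subset\mathbb{P}^4$ and are singular points of the branch surface, cut out by the contracted rulings of the projection.

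The final step is to bound these points, and this is where the main difficulty lies. A naive B\'ezout count of $V(c,P,Q,R)$ (degrees $3,1,2,3$) gives $18$, twice the desired bound, so the estimate must be sharpened, and moreover the passage from \emph{image points} to \emph{vertices} is not one-to-one: a ruling through $O_1$ can carry more than one triple point, exactly as happens for the three-collinear-triples configuration. The plan is therefore to use the ordinariness of the triple points — equivalently smoothness of each cone's base and of $B$ — together with the containment of all connecting lines in $X_{3,3}$, to show that the $18$ intersection points are either counted with multiplicity two or organised along the contracted rulings into collinear triples, bringing the effective number of vertices down to $9$. Controlling this incidence, and in particular bounding how many triple points a single contracted ruling can carry, is the crux; once it is in place the bound $\mu_3(X_{3,3})\le 9$ follows, with equality realised by the three-collinear-triples example.
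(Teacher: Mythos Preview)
Your outline has a genuine error and leaves the decisive step unproved. The error is the claim that distinct inherited triple points yield distinct members of $\mathscr{L}$. You deduce uniqueness of the vertex from smoothness of the base cubic threefold, and that smoothness from the ordinariness of $O_i$; but ordinariness of $O_i$ on the \emph{threefold} $X_{3,3}$ only says that the tangent cone of $X_{3,3}$ there is a cone over a smooth cubic \emph{surface}. That tangent cone is the intersection of the cone $X_{\alpha_i,\beta_i}$ with the tangent hyperplane at $O_i$ of a transversal member, so only a \emph{hyperplane section} of the base cubic threefold is forced to be smooth, not the base itself. The base may perfectly well be a cone over a cubic surface, in which case $X_{\alpha_i,\beta_i}$ has an entire \emph{line} of vertices. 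This is exactly what happens in the extremal configuration achieving $9$: the pencil contains only three special members, each with a vertex line, and the nine triple points sit three apiece on those lines. Your injection $\{O_i\}\hookrightarrow\mathbb{P}^1$ therefore fails precisely in the case that matters. Secondly, the refinement of the B\'ezout number $18$ down to $9$ is named as ``the crux'' but not actually carried out; and since the bijection with pencil members is unavailable, you have no alternative route either.

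For comparison, the paper does not use projection or B\'ezout at all for the bound. It argues directly with linear spans: the line through two triple points inherited from different cones lies in $X_{3,3}$ (as you also note), three non-collinear such points force a plane $\Pi\subset X_{3,3}$, and a fourth in general position would force a $\mathbb{P}^3\subset X_{3,3}$, contradicting irreducibility. One then bounds collinear and coplanar configurations (at most three collinear by degree; a coplanar bound via the vanishing of second partials on $\Pi\times\mathbb{P}^1$) and checks that a tenth point added to the three-collinear-triples configuration would again span a forbidden $\mathbb{P}^3$. Your double-cover description does appear in the paper, but as a separate structural lemma, not as the engine of the enumerative bound.
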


\begin{proof}
We begin by the analysis of points in general linear position. First, notice that a cubic fourfold having a triple point is necessarily a cone. Let us consider two triple points on $X_{3,3}$ denoted by $O_1$, $O_2$ and assume thery are inherited from $X_1$ and $X_2$ respectively. We observe that the line $L_{1,2}$ passing through $O_1$ and $O_2$ is contained in $X_{3,3}$ as it has at least a degree $4$ intersection with $X_1$ and $X_2$. On the other hand, it may happen that both points $O_1$ and $O_2$ are triple on the same fourfold $X_3$, which makes the line $L_{1,2}$ triple on $X_{3}$. Hence, there exists a point $O_3$ lying on $L_{1,2}$ which is triple on $X_{3,3}$ as the other cubic fourfold yielding the complete intersection threefold has to cut $L_{1,2}$ in three points in total. In short, we see that any two triple points on $X_{3,3}$ span a triple line contained in $X_{3,3}$ unless there is a third triple point of $X_{3,3}$ collinear with them. Analogously, we find that $3$ triple points in general linear position span a plane contained in $X_{3,3}$ and $4$ triple points in general linear position would span a $\mathbb{P}^3$ contained in $X_{3,3}$, which is a contradiction as this makes $X_{3,3}$ reducible. Thus there can be at most $3$ ordinary triple points in general linear position on $X_{3,3}$.

Let us see what happens when the points are not in general linear position. Assume there are $4$ OTPs on a line. Then the line is triple (by assumption) on one of the cubic fourfolds and contained in both (by degree count) and so singular on $X_{3,3}$, a contradiction. We show that there cannot be more than $8$ OTPs on the plane (the number $8$ is sufficient for our calculations yet it seems absurdly high and we believe $4$ triple points on a plane is the maximum). Remember our assumption that all the triple points are inherited from cubic fourfolds and thus the third triple point $O_3$ causes a plane $\Pi$ spanned by $O_1, O_2$ and $O_3$ to be contained in the intersection $X_{3,3}$. Assume that $\Pi=V(t,u,w)$. Since the plane has to be contained in every cubic fourfold $X_{\alpha,\beta}$ in the pencil $\mathscr{L}$ we can write the equation of the cubic in $\mathbb{P}^5\times\mathbb{P}^1$ as $X_{\alpha,\beta}=\alpha(tT_2+uU_2+wW_2)+\beta(tT'+uU'+wW')$ with $T,\dots ,  W'$ homogeneous in degree $2$. As every triple point $P$ has to be inherited from some fourfold, for each such point we need to be able to find parameters $\alpha_P$, $\beta_P$ such that the first and second derivatives of the equation $F$ defining $X_{\alpha_P,\beta_P}$ vanish. Moreover, we can restrict our analysis to the product of plane $\Pi$ with $\mathbb{P}^1$ since we assume that all the triple points lie on this plane. For brevity we use $\alpha_{i_m}$ to denote $\alpha$ or $\beta$, $x_{j_m}$ to denote variables from the set $\{x, y, z\}$, and $t_{k_m}$ to denote variables from the set $\{t, u, w\}$ for $m\in\{1,2\}$. Note that $\alpha_{i_1}$ may not be different from $\alpha_{i_2}$ etc. After restriction to $\Pi$ we have:
$$\frac{\partial ^2{F}}{\partial\alpha_{i_1}\partial\alpha_{i_2}}=\frac{\partial ^2{F}}{\partial x_{j_1}\partial x_{j_2}}=\frac{\partial ^2{F}}{\partial \alpha_{i_1}\partial x_{j_1}}=0, $$
$$\frac{\partial ^2{F}}{\partial{t_{k_1}}\partial{t_{k_2}}}=\alpha(\frac{\partial{T_{k_1}}}{\partial {t_{k_2}}}+\frac{\partial{T_{k_2}}}{\partial {t_{k_1}}})+\beta(\frac{\partial{T'_{k_1}}}{\partial {t_{k_2}}}+\frac{\partial{T'_{k_2}}}{\partial {t_{k_1}}}),$$
$$\frac{\partial ^2{F}}{\partial{\alpha_{i_1}}\partial{t_{k_1}}}=T_{k_1},$$
$$\frac{\partial ^2{F}}{\partial{x_{j_1}}\partial{t_{k_1}}}=\alpha{\frac{\partial T_{k_1}}{\partial x_{j_1}}}+\beta{\frac{\partial T'_{k_1}}{\partial x_{j_1}}}.$$
In this way we obtain $21$ degree $2$ equations that need to vanish simultaneously on $\Pi\times\mathbb{P}^1$. As three of them should cut out a finite set of points we see that a maximum number of triple points we can obtain that way is $8$. For reasons stated above any point outside the plane containing $3$ or more noncollinear triple points would cause $X_{3,3}$ to be reducible, a contradiction.

Assume now that in the pencil $\mathscr{L}$ there are two fourfolds $X_1$ and $X_2$ containing triple lines $L_1$ and $L_2$ respectively such that these lines do not meet. Then $X_1$ cuts $L_2$ in three points and similarly for $X_2$ and $L_1$. Each of these intersection points yields a triple point on $X_{3,3}$ and so we obtain $6$ triple points on $X_{3,3}$. As $L_1$ and $L_2$ are in general linear position we can assume $L_1=V(z,t,u,w)$ and $L_2=V(x,y,z,t)$. This means that $X_1=V(F(z,t,u,w))$ and $X_2=V(G_3(x,y,z,t))$, with each of the defining equations being independent of two out of six variables. Then the elements of the pencil $\mathscr{L}$ are as follows: $\bar{X}_{a,b}=V(aF_3+bG_3)$.
	Note that for $F(z,t,u,w)=a_1z^3+a_2t^3+a_3u^3+a_4w^3$, $G_3(x,y,z,t)=b_1x^3+b_2y^3+b_3z^3+b_4t^3$ with $a=b=1$ and $a_1=-b_3$ $a_2=-b_4$ we obtain a fourfold $X_{1,1}$ containing a triple line $L_3=V(x,y,u,w)$ providing another three triple points on $\bar{X}_{3,3}$ yielding a threefold $X_{3,3}$ with $9$ ordinary triple points as the only singularities. We cannot hope to obtain a $10$th triple point this way as we would always be able to find a plane $\Pi$ passing through one triple point of each of the lines $L_1, L_2, L_3$ and not containing the putative $P_{10}$ and thus giving $\mathbb{P}^3$ contained in $X_{3,3}$, a contradiction.
\end{proof}
	In particular, we have shown that $\mu_3(X_{3,3})=9$ if we assume that all the triple points are inherited, and $\mu_3(X_{3,3})\geq 9$ in general.

In the case when at least one triple point of $X_{3,3}$ is inherited from the triple point of some fourfold we have a nice characterization of $X_{3,3}$ as a double cover of a cubic threefold.

\begin{lemma}
	Assume that in the pencil of cubic fourfolds containing a complete intersection threefold $X_{3,3}\subset\mathbb{P}^5$ with only triple points as singularities there is a fourfold with a triple point. Then a threefold $\tilde{X}_{3,3}$ obtained by blowing up one of the triple points of $X_{3,3}$ is a double cover of a cubic threefold $Y_3$ ramified over a degree $12$ surface.
\end{lemma}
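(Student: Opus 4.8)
The plan is to exhibit the double cover explicitly by projecting away from the chosen inherited triple point. Choose coordinates so that $O=[1:0:0:0:0:0]$ is the triple point we blow up and so that the fourfold in the pencil $\mathscr{L}$ with a triple point at $O$ is $X_O=V(C_3)$ with $C_3=C_3(y,z,t,u,w)$ independent of $x$. By the observation in the previous theorem such a fourfold is a cone, here with vertex $O$ and base the cubic threefold $Y_3=V(C_3)\subset\mathbb{P}^4=\{x=0\}$. Since the whole pencil, and in particular $X_{3,3}=V(C_3,G_3)$, is contained in this cone, the lines through $O$ meeting $X_{3,3}$ are parametrised by $Y_3$, and the projection from $O$ therefore maps $X_{3,3}$ onto $Y_3$. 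Blowing up $O$ resolves this rational map to an honest morphism $\pi\colon\tilde X_{3,3}\to Y_3$.

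First I would compute the degree of $\pi$ over a general point of $Y_3$. Grouping $G_3$ by powers of $x$ gives $G_3=x^2b_1+xc_2+d_3$ with $b_1,c_2,d_3$ independent of $x$ of degrees $1,2,3$; the coefficient of $x^3$ vanishes because $O\in X_{3,3}$. Restricting to the line $L=\{[\lambda:\mu y:\cdots:\mu w]\}$ lying over a point of $Y_3$ yields $G_3|_L=\mu(\lambda^2b_1+\lambda\mu c_2+\mu^2d_3)$. The factor $\mu$ records the passage through $O$ and is removed on passing to the strict transform, since $G_3$ has multiplicity one at $O$ (its linear part $b_1$ is nonzero, as $V(G_3)$ is smooth there). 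What remains is the relative quadric $\lambda^2b_1+\lambda\mu c_2+\mu^2d_3=0$, so $L$ meets $X_{3,3}$ in two points away from $O$ and $\pi$ is generically two-to-one.

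Next I would identify the branch divisor. Over $\mathbb{C}$ one completes the square in $[\lambda:\mu]$, so $\pi$ ramifies exactly where the discriminant $\Delta=c_2^2-4b_1d_3$ of the relative quadric vanishes. Since $\deg c_2^2=\deg(b_1d_3)=4$, the form $\Delta$ has degree $4$, and the branch surface is $V(C_3,\Delta)\subset\mathbb{P}^4$: a complete intersection of a cubic and a quartic, hence of degree $3\cdot4=12$. This is the asserted degree $12$ surface over which $\tilde X_{3,3}\to Y_3$ is ramified.

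The step I expect to require the most care is the bookkeeping inside the blow-up. I would need to verify that the strict transform of the cone $V(C_3)$ is precisely the $\mathbb{P}^1$-bundle $\pi^{-1}(Y_3)$, that intersecting it with the strict transform of $V(G_3)$ produces exactly the relative quadric above without spurious exceptional components, and that the quadratic form genuinely defines a double cover with the stated branch locus. A related subtlety is that the lines joining $O$ to the other triple points lie on $X_{3,3}$ and are contracted by $\pi$; one must argue that these finitely many contracted fibres do not affect the generic double-cover structure nor the computation of the branch divisor, passing to the Stein factorisation of $\pi$ if necessary.
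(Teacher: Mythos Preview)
Your proposal is correct and follows essentially the same route as the paper: choose coordinates so that the cone $V(C_3)$ has vertex at the triple point, write the other cubic as $x^2b_1+xc_2+d_3$, project along the ruling to the base cubic threefold, and identify the branch locus as the intersection of $Y_3$ with the discriminant quartic $V(c_2^2-4b_1d_3)$. The paper's proof is terser and omits the blow-up bookkeeping and the discussion of contracted lines that you flag, but the argument is the same.
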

\begin{proof}
	Without loss of generality we may assume that $X_{3,3}$ is the complete intersection of a fourfold $X=V(x^2G_1+xG_2+G_3)$ and a cone $\bar{Y}=V(F_3)$ where $G_i$ and $F_i$ are homogeneous polynomials of degree $i$ independent of $x$. Note that in this case $P$ is exactly the vertex of the cone $\bar{Y}$ through which $X$ passes smoothly. A general line in $\mathbb{P}^5$ cuts $X$ in three points. As $X$ passes through $P$, this means that it cuts a general line of the ruling of the cone $\bar{Y}$ in $2$ other points. As we blow up $P$ and obtain $\tilde{X}_{3,3}$ we can treat it as a double cover of the base of the cone, that is $Y\subset\mathbb{P}^4$. We still have to check the ramification locus. This is exactly the vanishing locus of the discriminant, $D=V(G_2^2-4G_1G_3)$, calculated with respect to $x$ from the equation defining $X$. The intersection of $D$ and $Y$ in $\mathbb{P}^4$  is a degree $12$ surface $S_{12}$. This concludes the proof.
\end{proof}

\section{Triple points on $X_{2,2,3}$  in $\mathbb{P}^6$}
We provide an easily obtainable example of na $X_{2,2,3}$ threefold with $4$ ordinary triple points.
\begin{theorem}
There exists a complete intersection of two quadrics and a cubic in $\mathbb{P}^6$ with exactly $4$ ordinary triple points as the singular locus.
\end{theorem}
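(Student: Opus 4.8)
The plan is to construct the example explicitly, taking the cubic to be a cone over a smooth cubic surface and the two quadrics to be general. Work in $\mathbb{P}^6$ with coordinates $[x_0:\dots:x_6]$ and let $D(x_3,x_4,x_5,x_6)$ be a cubic form defining a \emph{smooth} cubic surface $S=V(D)\subset\mathbb{P}^3$, for instance the Fermat cubic $x_3^3+x_4^3+x_5^3+x_6^3$. Setting $C=D$, the fivefold $V(C)\subset\mathbb{P}^6$ is a cone over $S$ with vertex the plane $\Pi=V(x_3,x_4,x_5,x_6)=\{[x_0:x_1:x_2:0:0:0]\}$, and every point of $\Pi$ is a triple point of $V(C)$ with tangent cone the cone over $S$. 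I would then choose two general quadrics $Q_1,Q_2$ and set $X_{2,2,3}=V(Q_1,Q_2,C)$; for general $Q_1,Q_2,C$ these form a regular sequence, so this is a genuine complete intersection threefold (and since $2+2+3=6+1$ it is Calabi--Yau). The candidate triple points are the points of $X_{2,2,3}$ on $\Pi$: because $C$ vanishes identically on $\Pi$, they are exactly the points of $Q_1\cap Q_2\cap\Pi$, and since $Q_1,Q_2$ restrict to two conics on $\Pi\cong\mathbb{P}^2$, Bézout gives precisely $4$ such points, distinct and reduced for general $Q_1,Q_2$.

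Next I would verify that each of these four points is an ordinary triple point. Fix one and move it to $P=[1:0:\dots:0]$; in the chart $x_0=1$ the cubic $C=D(x_3,x_4,x_5,x_6)$ already equals its own lowest-order part, while each $Q_i=L_i+(\text{higher order})$ with $L_i$ the linear tangent form. The tangent cone of $X_{2,2,3}$ at $P$ is then $V(L_1,L_2,D)$, i.e. the restriction of $D$ to the $\mathbb{P}^3$ cut out by $L_1=L_2=0$. Writing $L_i=\sum_{k=1}^{6}c^{(i)}_k x_k$, the vertex directions of the cone are $x_1,x_2$; provided the $2\times 2$ matrix of coefficients $(c^{(i)}_1,c^{(i)}_2)$ is invertible, the equations $L_1=L_2=0$ eliminate $x_1,x_2$ and identify the cut $\mathbb{P}^3$ with $\mathbb{P}^3_{x_3x_4x_5x_6}$, on which $D$ restricts to the smooth cubic surface $S$. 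Hence the tangent cone is a cone over a smooth cubic surface and $P$ is an ordinary triple point. The coefficients $c^{(i)}_1,c^{(i)}_2$ are exactly the $x_0x_1,x_0x_2$ coefficients of $Q_i$, so the invertibility at all four points is an open, nonempty, and easily arranged condition on $(Q_1,Q_2)$.

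Finally I would show these four points exhaust the singular locus. Since $\mathcal{O}_{\mathbb{P}^6}(2)$ is very ample, the system of quadrics is base-point-free, and as $V(C)\setminus\Pi$ is a smooth fivefold, two applications of Bertini's theorem give that for general $Q_1,Q_2$ the locus $X_{2,2,3}\setminus\Pi=Q_1\cap Q_2\cap\bigl(V(C)\setminus\Pi\bigr)$ is smooth. Combined with the fact that $X_{2,2,3}\cap\Pi$ consists exactly of the four ordinary triple points above, this yields $\mathrm{Sing}(X_{2,2,3})$ equal to precisely those four points. The only genuine content is this last step, namely simultaneously guaranteeing smoothness off $\Pi$ and smoothness of the four tangent-cone cubic surfaces while the quadrics are constrained to pass through the four base points on $\Pi$; all of these are open conditions of the expected kind, so I expect to settle it either by the Bertini/genericity argument just sketched or, most transparently for an ``easily obtainable'' example, by writing down one explicit pair $Q_1,Q_2$ together with the Fermat $C$ and checking the rank of the Jacobian directly at and away from the four points.
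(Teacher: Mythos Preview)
Your proof is correct and follows essentially the same route as the paper: the paper's one-line argument is to take a cubic fivefold with a triple plane and cut it by two general quadrics, and your cone over a smooth cubic surface with vertex $\Pi$ is exactly such a fivefold. You have simply filled in the details the paper leaves implicit (Bézout on $\Pi$, the tangent-cone computation, and the Bertini argument off $\Pi$).
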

\begin{proof}
It is enough to choose a cubic fivefold having a triple plane and intersect it with two general quadric fourfolds. The complete intersection will have $4$ triple points lying on this plane as the only singularities.
\end{proof}
There is an interesting link between the geometry of the complete intersection threefolds $X_{3,3}$ and $X_{2,2,3}$, analogous to the one between $X_{2,4}$ and $X_5$ shown in the proof of the Theorem \ref{6P}.
\begin{lemma}
The projection from a triple point of a $X_{2,2,3}$ having $n$ triple points is a complete intersection $X_{3,3}$ threefold in $\mathbb{P}^5$ with at least $n-1$ triple points and $12$ double points.  
\end{lemma}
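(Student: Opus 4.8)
The plan is to mimic the projection argument from Theorem \ref{6P}, where projecting $X_{2,4}\subset\mathbb{P}^5$ from a triple point produced a quintic $X_5\subset\mathbb{P}^4$. Here I would take $X_{2,2,3}\subset\mathbb{P}^6$ with a triple point at $O$ and project from $O$ onto a $\mathbb{P}^5$. As before, the projection is given by the linear system $|H-E|$ of strict transforms (under the blowup of $O$, with exceptional divisor $E$) of hyperplanes through $O$, and the degree of the image is computed by the self-intersection number $(H-E)^3$ on the blown-up threefold. The first step is therefore to choose coordinates so that $O=[1:0:\dots:0]$ and to write the two quadrics and the cubic in the form $F_2^{(k)}=xG_1^{(k)}+G_2^{(k)}$ for $k=1,2$ and $F_3=xG_2+G_3$, with the $G_i$ independent of the first variable $x$, which is exactly the local shape of an ordinary triple point on a complete intersection of these degrees.

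Next I would carry out the elimination of $x$ explicitly. On the cone over $O$ the two quadrics and the cubic each vanish to the appropriate order, so a general line through $O$ meets $X_{2,2,3}$ in the point $O$ plus the residual intersection, and eliminating $x$ from the three defining equations yields the equations of the image in $\mathbb{P}^5$. I expect the resolvent of the system $\{xG_1^{(1)}+G_2^{(1)},\,xG_1^{(2)}+G_2^{(2)},\,xG_2+G_3\}$ to produce two cubic relations in the remaining six variables, so that the image is precisely a complete intersection $X_{3,3}\subset\mathbb{P}^5$. This matches the degree bookkeeping: the residual degree after removing the linear contribution of $O$ should give a codimension-two threefold cut out by two cubics, and I would confirm $(H-E)^3$ gives the correct degree $9$.

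The remaining two points are the transfer of singularities. Each of the other $n-1$ triple points $O_i\neq O$ projects to a triple point $O_i'$ on $X_{3,3}$, giving at least $n-1$ triple points, exactly as in the proof of Theorem \ref{6P}; one must only check that no two of them collide under projection, i.e.\ that no $O_i$ is collinear with $O$ and another $O_j$, which is ensured generically (or argued away as in the quintic case). The main obstacle, and the step deserving the most care, is producing the $12$ double points: these should arise as the locus where all the $G_i$ appearing in the eliminant vanish simultaneously, analogous to the $24$ nodes on $X_5$ coming from $G_1=G_2=G_3=G_4=0$ in Theorem \ref{6P}. Here I would identify the corresponding simultaneous vanishing locus of the polynomials $G_1^{(1)},G_1^{(2)},G_2,G_3$ (the $x$-independent parts) and compute its cardinality by Bézout in the appropriate $\mathbb{P}^5$; the expected count $1\cdot1\cdot2\cdot3\cdot\dots$ must be reconciled with the dimension drop so as to yield exactly $12$, and verifying that these are genuine ordinary double points (rather than higher singularities or points where $X_{3,3}$ remains smooth) is where the argument needs the most attention.
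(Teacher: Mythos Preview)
Your overall strategy---project from the triple point $O$ and mimic the proof of Theorem~\ref{6P}---is exactly what the paper does. The gap is in your normal form for the defining equations, and this error propagates through the elimination and the node count.

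You write the cubic as $F_3=xG_2+G_3$. That gives the cubic fivefold only a double point at $O$, so the complete intersection with two quadrics smooth at $O$ has multiplicity $1\cdot 1\cdot 2=2$, not $3$. For $O$ to be an ordinary triple point of $X_{2,2,3}$ one must (after replacing $F_3$ by $F_3$ plus suitable linear multiples of the two quadrics) have $F_3$ \emph{independent of $x$}: the cubic fivefold is then a cone with vertex $O$, and the tangent cone of the complete intersection is the intersection of the two tangent hyperplanes $G_1^{(1)}=G_1^{(2)}=0$ with this cubic cone, a cone over a cubic surface as required. The paper states exactly this: ``Note that $F_3$ is independent of $x$.''

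Once $F_3$ is free of $x$, there is no three--equation resolvent to compute. The polynomial $F_3$ is already one of the two cubics cutting out $X_{3,3}$ in $\mathbb{P}^5$; the second cubic is obtained by eliminating $x$ between the two quadrics alone, namely $G_1^{(1)}G_2^{(2)}-G_1^{(2)}G_2^{(1)}$ (the paper's $A_2B_1-B_2A_1$), which has degree $1+2=3$.

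Your identification of the double points is accordingly off. The lines through $O$ contained in $X_{2,2,3}$ (hence contracted by the projection) are those on which all three hypersurfaces vanish identically, i.e.\ where both pieces of each quadric vanish together with $F_3$. This is the locus
\[
G_1^{(1)}=G_2^{(1)}=G_1^{(2)}=G_2^{(2)}=F_3=0
\]
in $\mathbb{P}^5$, five equations of degrees $1,2,1,2,3$, giving $1\cdot 2\cdot 1\cdot 2\cdot 3=12$ points by B\'ezout, each producing a node on $X_{3,3}$. Your list $G_1^{(1)},G_1^{(2)},G_2,G_3$ has only four equations (so defines a curve, not points) and involves the spurious pieces $G_2,G_3$ of the incorrect cubic; the degree~$2$ parts $G_2^{(1)},G_2^{(2)}$ of the quadrics are missing.
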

\begin{proof}
Let $F_3$ and $F_2=xA_1+A_2$, $G_2=xB_1+B+2$ denote homogeneous polynomials defining $X_{2,2,3}$ in $\mathbb{P}^5$. Note that $F_3$ is independent of $x$. Then $F_3=0$ and $A_2B_1-B_2A_1=0$ define the threefold $X_{3,3}$ that is the image of the projection from the triple point $O$ of $X_{2,2,3}$. Any triple points other than $O$ project to triple points on $X_{3,3}$ and we also additionally obtain $12$ double points whenever $A_1=A_2=B_1=B_2=F_3=0$, as the image of lines passing through $O$ contracted by this projection.
\end{proof}	

\section{Triple points on a sextic in $\mathbb{P}{[1:1:1:1:2]}$}
For background on weighted projective spaces (WPS) see for example \cite{ID} or \cite{MiR2}. 
\begin{theorem}\label{t6}
A sextic hypersurface $X_6$ in $\mathbb{P}[1:1:1:1:2]$ with only ordinary triple points as singularities can have at most $10$ ordinary triple points and this bound is attainable.
\end{theorem}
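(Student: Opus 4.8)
The plan is to combine the triple-cover description of $X_6$ with one observation that confines every singular point to a single surface. Write $X_6=V(F)$ with
$$F=\lambda y^3+a_2(x)y^2+b_4(x)y+c_6(x),$$
where $x=(x_0,x_1,x_2,x_3)$ and $a_i,b_i,c_i$ are forms of degree $i$ in $x$; projection away from the weight-$2$ point $[0:0:0:0:1]$ realises $X_6$ as the triple cover of $\mathbb{P}^3$ furnished by the Veronese picture. When $\lambda\neq 0$ I would complete the cube, $y\mapsto y-a_2/3$, and assume $F=y^3+p_4(x)y+q_6(x)$. The degenerate case $\lambda=0$, in which $X_6$ passes through the (singular) cone vertex and the cover picture breaks down, I would dispose of separately.

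The crux is that at an ordinary triple point $O$ the whole $2$-jet of $F$ vanishes, so in particular $\partial^2 F/\partial y^2=6\lambda y+2a_2(x)$ vanishes at $O$. Since this form is independent of $O$, \emph{every} ordinary triple point lies on the single hyperplane section $S:=X_6\cap V(\partial^2 F/\partial y^2)$ (a hyperplane in the Veronese embedding into $\mathbb{P}^{10}$); this is exactly the assertion that all triple points sit in one hyperplane section. Eliminating $y$ from $F=0$ and $6\lambda y+2a_2=0$ identifies $S$ with a sextic surface $\bar S\subset\mathbb{P}^3$ (in the completed-cube normalisation simply $\bar S=V(q_6)$). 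A local computation in a chart around $O$ confirms that $O$ is a genuine triple point of $\bar S$: writing the affine equation as $y^3+p(x)y+q(x)$ and recentring, the triple-point condition forces $q$ to vanish to order three and $p$ to order two, with tangent cone $y^3+p_2y+q_3$, and smoothness of this cubic surface (the definition of an ordinary triple point) forces $q_3\neq0$, so $O$ has multiplicity exactly three on $\bar S$. As such a point absorbs the whole fibre of the triple cover, distinct triple points of $X_6$ give distinct triple points of $\bar S$.

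This reduces the problem to counting triple points of a sextic surface in $\mathbb{P}^3$, which I would bound exactly as in the proof of Theorem \ref{6P}: apply the spectral (polar) semicontinuity bound of Varchenko, comparing the sum of the local spectra of the triple points of $\bar S$, each contributing a fixed number of spectral values in a suitable window $(\alpha,\alpha+1)$, with the corresponding count for the reference degree-six cone singularity. Running this comparison is meant to produce the numerical bound $10$. Where a naive count is too weak, I would bring in the extra constraint inherited from the cover, namely that $p_4$ also acquires a double point at each such $x$, to eliminate the borderline configurations, in the same spirit in which the $24$ nodes sharpened the estimate for the derived quintic in Theorem \ref{6P}.

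For attainability I would exhibit an explicit $X_6$ meeting the bound. By the reduction it suffices to produce a sextic surface $\bar S\subset\mathbb{P}^3$ with $10$ ordinary triple points together with a quartic $V(p_4)$ that is doubly singular at the same ten points, and then to recover $a_2,b_4,c_6$ from $\bar S=V\big(\tfrac{2}{27}a_2^3-\tfrac13 a_2 b_4+c_6\big)$; a highly symmetric (for instance tetrahedrally symmetric) configuration of the ten points is the natural candidate, mirroring the extremal examples of \cite{KR}. I expect the main obstacle to be the spectral computation on $\bar S$: showing that the window count yields exactly $10$ rather than a weaker bound, and in particular that the cover constraint on $p_4$ removes precisely the configurations a bare spectral or polar estimate would still allow. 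Verifying that the extremal surface genuinely lifts to an $X_6$ with no further singularities is the second delicate point.
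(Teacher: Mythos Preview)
Your reduction is correct and matches the paper exactly: the second partial $\partial^2 F/\partial y^2$ cuts out a unique hyperplane section carrying all the triple points, and after completing the cube this is the sextic surface $\bar S=V(q_6)\subset\mathbb{P}^3$, on which each ordinary triple point of $X_6$ becomes a triple point. The paper argues just as you do up to this point.

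The gap is in the next step. You propose to bound the triple points of $\bar S$ by a Varchenko-type spectral count, but this only makes sense when those triple points are \emph{isolated} singularities of $\bar S$, i.e.\ when $\bar S$ is normal. In that case the bound is already a theorem of Endra\ss--Persson--Stevens \cite{EPS}, which the paper simply cites. The substantive work, and what your outline does not address, is the case where $\bar S$ is non-normal: then the triple points may sit on a singular curve of $\bar S$ and no semicontinuity-of-spectra argument on $\bar S$ alone applies. The paper handles this by a direct B\'ezout computation using precisely the extra datum you noticed but did not exploit: the quartic $V(p_4)$ must also be singular (order $\geq 2$) at every triple point of $X_6$. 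Intersecting $\bar S$, a general polar quintic $X_5\in|\partial q_6|$, and $V(p_4)$ gives expected degree $6\cdot5\cdot4=120$, while $n$ triple points of $X_6$ already contribute at least $n\cdot3\cdot2\cdot2$; so $n\geq 11$ forces a common curve, which must lie in the singular locus of $\bar S$ and in $V(p_4)$, and is therefore singular on $X_6$ itself, contradicting isolated singularities. (One also checks that a two-dimensional singular locus of $\bar S$ immediately produces a singular curve on $X_6$.) Your idea of ``bringing in $p_4$ to eliminate borderline configurations'' is exactly the right instinct, but it is the whole argument in the non-normal case, not a refinement of a spectral estimate.

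For attainability you are making life harder than necessary. Taking $p_4=0$, i.e.\ $F=y^3+q_6(x)$ with $q_6$ any of the sextic surfaces with ten ordinary triple points from \cite{EPS} or \cite{JS}, already works: away from $y=0$ the threefold is smooth, and on $y=0$ the singularities are precisely those of $V(q_6)$. There is no need to find a nontrivial quartic doubly singular at ten prescribed points.
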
 

We work in the weighted projective space $\mathbb{P}[1:1:1:1:2]$ with variables $x,y,z,t$ of weight $1$ and $u$ of weight $2$. We can thus write the equation of a sextic hypersurface as 
$F(x,y,z,t,u)=u^3+u^2G_2(x,y,z,t)+uG_4(x,y,z,t)+G_6(x,y,z,t)$ where $G_i$ is a homogeneous polynomial of degree $i$. Note that the term $u^3$ has to appear in this equation as otherwise the sextic would pass through the singular point of $WPS$ and inherit its singularity. 

\begin{lemma}
All triple points on a sextic hypersurface $X_6$ in $\mathbb{P}[1:1:1:1:2]$ have to lie on a sextic surface that is a hyperplane section of $X$.
\end{lemma}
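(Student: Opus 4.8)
The plan is to realise $X_6$ as a triple cover of $\mathbb{P}^3$ and to pin down the fibre behaviour over a triple point. Writing $\pi\colon X_6\dashrightarrow \mathbb{P}^3$ for the projection $[x:y:z:t:u]\mapsto[x:y:z:t]$, the fibre of $\pi$ over a point $p=[x:y:z:t]$ is the set of roots $u$ of the cubic
$$\phi_p(u)=u^3+G_2(p)u^2+G_4(p)u+G_6(p).$$
Since the coefficient of $u^3$ in $F$ is $1$, the surface $X_6$ misses the vertex $[0:0:0:0:1]$ of the weighted projective space, so $\pi$ is defined on all of $X_6$ and every point $O\in X_6$ has a well-defined image $p_0=\pi(O)$ and a finite fibre coordinate $u_0$. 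I would reduce the lemma to the assertion that every triple point lies on the weight-two divisor $V(3u+G_2)$, equivalently that its image $p_0$ is a totally ramified point of the cover.

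The heart of the argument is a local multiplicity computation. Near $O=(p_0,u_0)$ the ambient variety is smooth (we are away from the vertex), so in a suitable affine chart of the base the coordinates $x,y,z$ together with the fibre coordinate $v=u-u_0$ form local analytic coordinates, with $F$ a local equation; the multiplicity of $X_6$ at $O$ is then the order of vanishing of $F$ in these coordinates. Taylor-expanding $F$ at $O$, the constant term is $\phi_{p_0}(u_0)$, the coefficient of $v$ is $\phi_{p_0}'(u_0)$, and the coefficient of the pure square $v^2$ is $\tfrac12\phi_{p_0}''(u_0)=3u_0+G_2(p_0)$, since mixed monomials $v\,x_i$ and the base directions contribute to other terms. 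If $O$ is a triple point then the entire part of the expansion of degree $\le 2$ vanishes; in particular $\phi_{p_0}(u_0)=\phi_{p_0}'(u_0)=\phi_{p_0}''(u_0)=0$, so $u_0$ is a triple root of $\phi_{p_0}$. Hence $\phi_{p_0}(u)=(u-u_0)^3$, which forces $u_0=-\tfrac13 G_2(p_0)$, that is $3u_0+G_2(p_0)=0$.

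Consequently every triple point of $X_6$ lies on $V(3u+G_2)$. To identify this with the asserted surface, I would apply the weighted automorphism $u\mapsto u-\tfrac13 G_2$, after which $F$ takes the reduced form $u^3+P_4u+Q_6$ with $P_4=G_4-\tfrac13 G_2^2$ and $Q_6=G_6-\tfrac13 G_2G_4+\tfrac{2}{27}G_2^3$; in these coordinates $V(3u+G_2)$ becomes $V(u)\cong\mathbb{P}^3$ and $X_6\cap V(u)=V(Q_6)$ is a sextic surface in $\mathbb{P}^3$. Finally, since $3u+G_2$ is a form of weight $2$, it defines a hyperplane section of $X_6$ in the embedding of $\mathbb{P}[1:1:1:1:2]$ by $\mathcal{O}(2)$ (the Veronese-type realisation recalled in the introduction), which is exactly the statement. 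The step I expect to require the most care is the local multiplicity argument: one must argue cleanly, in the weighted setting, that multiplicity three kills the pure $v^2$ term and hence forces $\phi_{p_0}''(u_0)=0$. The remainder is bookkeeping, and I note that the ordinary (smooth tangent cone) hypothesis is not needed for this lemma, only the multiplicity being exactly three.
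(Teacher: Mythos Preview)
Your argument is correct and follows essentially the same approach as the paper: the paper simply observes that at a triple point all second partials of $F$ vanish, in particular $\tfrac{\partial^2 F}{\partial u^2}=6u+2G_2$, forcing $O\in V(3u+G_2)$, and your Taylor expansion in the fibre variable $v=u-u_0$ is precisely the local-coordinate version of that same computation. Your triple-cover framing and the explicit coordinate change to $V(u)$ are more detailed than the paper's two-line proof, but the key step---vanishing of the pure $u^2$-coefficient $3u_0+G_2(p_0)$---is identical.
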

\begin{proof}
For $O$ to be an (ordinary) triple point of $X_6$ we need all the second derivatives of $F$ to vanish. In particular, we need $\frac{\partial^2 F}{\partial u^2}=6u+2G_2(x,y,z,t)=0$ and thus $O$ has to lie on $V(3u+G_2(x,y,z,t))$ which is isomorphic to $\mathbb{P}^3$. This $\mathbb{P}^3$ cuts $X_6$ in a sextic surface.  
\end{proof}
We can perform a change of variables so the hyperplane section of $X_6$ is $u=0$. Thus we can consider triple points of $X_6$ as lying on a sextic surface in $\mathbb{P}^3$ which is the vanishing locus of $G_6(x,y,z,t)$. 
\begin{fact}
Triple points of a sextic threefold $X_6$ are also triple points of its hyperplane section $S_6=V(G_6)$. 
\end{fact}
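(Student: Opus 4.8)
The plan is to reduce everything to an explicit local computation in a smooth affine chart of the weighted projective space and to read off the multiplicity of $G_6$ from the multiplicity of $F$. After the change of variables that puts the distinguished hyperplane at $u=0$ (completing the cube in $u$, which is legitimate because $u$ and $G_2$ both have weight $2$), the defining equation becomes $F = u^3 + uG_4 + G_6$ with $G_4,G_6$ independent of $u$, and $S_6 = V(G_6)$. Since the only singular point of $\mathbb{P}[1:1:1:1:2]$ is $[0:0:0:0:1]$, which does not lie on $\{u=0\}$, every triple point $O$ lies in the smooth locus. Choosing the chart $\{x\neq 0\}\cong\mathbb{A}^4$ with coordinates $y'=y/x,\ z'=z/x,\ t'=t/x,\ v'=u/x^2$, the threefold is an honest hypersurface, so ``ordinary triple point'' carries its usual meaning: multiplicity $3$ with smooth projectivised tangent cone. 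In this chart $F$ dehomogenises to $f = (v')^3 + v' g_4 + g_6$, where $g_4,g_6$ depend only on $y',z',t'$, and $S_6$ is the surface $V(g_6)\subset\mathbb{A}^3$.

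Next I would place $O$ at the origin and expand $f$ by total degree in $(y',z',t',v')$. The crucial structural point is that $f$ contains no $(v')^2$ term, so the part of $f$ of degree $\le 2$ splits cleanly into a contribution from $v'\cdot(\text{low-order part of }g_4)$ and one from the low-order part of $g_6$. Imposing $\operatorname{mult}_O(f)\ge 3$ forces the degree $0,1,2$ parts to vanish separately; since these involve precisely the constant, linear and quadratic parts of $g_6$ (together with the constant and linear parts of $g_4$), I conclude that $g_6$ has no terms of degree $\le 2$ at $O$, whence $\operatorname{mult}_O(S_6)\ge 3$.

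To upgrade this to an exact triple point I would examine the degree-$3$ part of $f$, namely the tangent cone $(v')^3 + g_4^{(2)}v' + g_6^{(3)}$, where the superscripts denote homogeneous parts of the indicated degree. If $g_6^{(3)}$ vanished identically this cubic form would factor as $v'\big((v')^2 + g_4^{(2)}\big)$, so the projectivised tangent cone of $X_6$ at $O$ would be reducible, contradicting the hypothesis that $O$ is an \emph{ordinary} triple point (whose tangent cone is a smooth cubic surface). Hence $g_6^{(3)}\not\equiv 0$ and $\operatorname{mult}_O(S_6)=3$, so $O$ is a triple point of $S_6$. As a by-product, intersecting the tangent cone with $\{v'=0\}$ identifies the tangent cone of $S_6$ at $O$ with $V(g_6^{(3)})$, the hyperplane section of that smooth cubic surface.

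The main obstacle is bookkeeping rather than depth: one must ensure the weighted grading is handled correctly, namely that the completing-the-cube substitution and the dehomogenisation $v'=u/x^2$ are valid weighted automorphisms and chart maps, and that multiplicity is measured in the honestly smooth chart $\mathbb{A}^4$ and not naively in the weighted coordinates. Once this is arranged, the absence of a $(v')^2$ term is exactly what decouples the low-order parts of $f$, and the smooth-tangent-cone hypothesis is exactly what excludes $g_6^{(3)}\equiv 0$. I would not assert that $O$ is an \emph{ordinary} triple point of $S_6$: a hyperplane tangent to the cubic surface can render $V(g_6^{(3)})$ a singular plane cubic, so only the multiplicity statement is claimed.
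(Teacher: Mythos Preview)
Your argument is correct. The paper states this as a \emph{Fact} and gives no proof at all, so you are supplying the details rather than diverging from an existing argument. The route the paper implicitly has in mind is almost certainly the partial-derivative bookkeeping it used in the preceding Lemma: with $F=u^3+uG_4+G_6$ and $O$ on $\{u=0\}$, one simply reads off $\partial_{x_i}F|_{u=0}=\partial_{x_i}G_6$ and $\partial_{x_i}\partial_{x_j}F|_{u=0}=\partial_{x_i}\partial_{x_j}G_6$, so vanishing of all first and second partials of $F$ at $O$ forces the same for $G_6$. Your local-chart/tangent-cone formulation is exactly equivalent, just packaged via multiplicities instead of partials.

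Two points worth noting. First, your care in passing to a genuine smooth affine chart $\{x\neq 0\}\cong\mathbb{A}^4$ before speaking of multiplicity is the right way to handle the weighted setting; just observe (as you implicitly do) that since $O\neq[0{:}0{:}0{:}0{:}1]$ one of the weight-$1$ coordinates is nonzero, so such a chart is always available. Second, your use of the \emph{ordinary} hypothesis to exclude $g_6^{(3)}\equiv 0$ and pin down multiplicity exactly $3$ is a genuine addition: the naive partial-derivative argument only gives $\operatorname{mult}_O(G_6)\ge 3$, and your observation that $g_6^{(3)}\equiv 0$ would make the projectivised tangent cone reducible (hence not a smooth cubic surface) is the clean way to rule out higher multiplicity. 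Your closing caveat, that $O$ need not be an \emph{ordinary} triple point of $S_6$, is also correct and worth keeping.
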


From Proposition 3.2 in \cite{EPS} we know that if $S_6$ is a normal surface it can have at most $10$ (ordinary) triple points. Thus if we want to obtain more than $10$ triple points on $X_6$ we need to assume that $S_6$ is not normal. Then $S_6$ must have at least one-dimensional singular locus. We first assume that $\dim(\text{Sing}S_6)=2$. This is for example the case when $S_6$ is a triple quadric.
\begin{lemma}
A sextic threefold $X_6$ in $\mathbb{P}(1:1:1:1:2)$ whose triple points lie on a hyperplane section with a two-dimensional singular locus admits a singular curve.
\end{lemma}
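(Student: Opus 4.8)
The plan is to convert the hypothesis on the hyperplane section into an algebraic statement about $G_6$ and then exhibit an explicit curve inside $\mathrm{Sing}(X_6)$. Recall that after the normalising change of variables the section is $S_6 = V(G_6) \subset \mathbb{P}^3 = \{u=0\}$, a hypersurface whose singular locus is $V(G_6, \partial_x G_6, \partial_y G_6, \partial_z G_6, \partial_t G_6)$. For a reduced sextic this locus is at most one-dimensional, so a two-dimensional singular locus is possible only if it fills out all of $V(G_6)$; equivalently $G_6$ is non-reduced. I would therefore start by writing $G_6 = H^2 K$, where $H$ collects the repeated factors and has degree at least $1$.

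Next I would compute $\mathrm{Sing}(X_6)$ for $F = u^3 + u^2 G_2 + u G_4 + G_6$ along the hyperplane $u=0$. The partial derivatives are $\partial_{x_i} F = u^2 \partial_{x_i} G_2 + u \partial_{x_i} G_4 + \partial_{x_i} G_6$ and $\partial_u F = 3u^2 + 2u G_2 + G_4$, which restrict on $u=0$ to $\partial_{x_i} F = \partial_{x_i} G_6$ and $\partial_u F = G_4$. Since $G_6 = H^2 K$, each $\partial_{x_i} G_6$ is divisible by $H$, so $F$ together with all four derivatives $\partial_x F, \dots, \partial_t F$ vanish identically on $V(u) \cap V(H)$. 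Hence the only additional equation a point of $V(u,H)$ must satisfy in order to be singular on $X_6$ is $\partial_u F = G_4 = 0$; note that this last condition is independent of $G_2$, so the earlier normalisation of the section plays no role here.

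I would then conclude that $\mathrm{Sing}(X_6) \supseteq V(u, H, G_4)$. Inside the $\mathbb{P}^3$ defined by $u=0$ this is the intersection $V(H) \cap V(G_4)$ of two hypersurfaces of positive degree, which has dimension at least $3 - 1 - 1 = 1$ and so is a nonempty curve (and if $G_4 \equiv 0$ one gets the whole surface $V(u,H)$). In every case $X_6$ acquires a one-dimensional singular locus, which is precisely the assertion; in particular such an $X_6$ cannot have only isolated ordinary triple points.

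The step needing the most care is the first: I must justify that a two-dimensional singular locus of the sextic surface is equivalent to $G_6$ having a repeated factor, and isolate $H$ so that the divisibility $H \mid \partial_{x_i} G_6$ holds for each $i$. The remainder is a direct derivative computation. The only mild subtlety is the weighted ambient space, but because the coefficient of $u^3$ is nonzero, $X_6$ avoids the singular point $[0:0:0:0:1]$ of $\mathbb{P}(1:1:1:1:2)$ and the ordinary Jacobian criterion applies on the locus in question.
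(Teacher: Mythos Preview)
Your argument is correct and follows the same idea as the paper: restrict the Jacobian of $F$ to $u=0$, where it becomes $(\partial_{x_i}G_6,\,G_4)$, and observe that $V(u)\cap V(G_4)\cap\mathrm{Sing}(S_6)$ is at least a curve on which all partials vanish. The only difference is that the paper works directly with the two-dimensional set $\mathrm{Sing}(S_6)$ and intersects it with $V(G_4)$, whereas you insert the (correct but unnecessary) intermediate step of deducing non-reducedness and factoring $G_6=H^2K$ to replace $\mathrm{Sing}(S_6)$ by the explicit surface $V(H)$; either way one lands on the same singular curve.
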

\begin{proof}
We have $X_6=V(u^3+u^2G_2+uG_4+G_6)$. Note that on the curve $C=V(u)\cap V(G_4)\cap\text{Sing}S_6\subset X_6$ all partials derivatives of the equation defining $X_6$ vanish. This finishes the proof.
\end{proof}
From now on we are only concerned with the case where $S_6$ is a non-normal surface with one-dimensional singular locus.

\begin{proof}[Proof of \ref{t6}]
Let the defining equaton of $X_6$ be $F(x,y,z,t,u)=u^3+u^2G_2(x,y,z,t)+uG_4(x,y,z,t)+G_6(x,y,z,t)$. Let $G_6$ be the equation of a non-normal sextic surface with at least $11$ triple points and denote by $\Sigma$ the set of those triple points. We modify the argument regarding the polar bound to encompass the case where $S_6$ is not normal. Let us consider $|S_5|$, the linear system of polar surfaces of $S_6$ that is, the system of quintic surfaces generated by the partial derivatives of $G_6$. The base locus of this system is the union of a singular curve of $S_6$ and a set $\Sigma$ taken with multiplicity two, thus the general member $X_5$ of $|S_5|$ will be a quintic containing the singular curve of $S_6$ and having double points at $\Sigma$. Note that the dimension of intersection of $S_6$ and a general polar surface is $1$ as expected, and contains the singular curve of $S_6$. Now, $S_6$, $X_5$ and $X_4=V(G_4)$ should intersect only in a finite number of points. As we know that all these surfaces contain points of $\Sigma$ as triple or double points, we can calculate the degree of this intersection to be at least $11\cdot 3\cdot 2\cdot 2=132$ which is more than expected $120$ and thus there has to be at least a common curve of intersection for all these surfaces. As this calculation is valid for all the polar surfaces of $S_6$, this curve has to be contained in the singular curve of $S_6$, but then, from calculating the partials, it would be singular on $X_6$, contradicting the assumption that the singular locus of $X_6$ has only isolated ordinary triple points. 

To prove the final statement of the theorem it is enough to use the equation $X_6=V(u^3+G_6)$ where $G_6$ is any of the sextics with $10$ OTPs as in \cite{EPS} or \cite{JS}. It is clear that $X_6$ cannot be singular for $u\neq 0$, and for $u=0$ the only singularities are those on $V(G_6)=S$. Also, $X$ misses the singular point of the WPS as the monomial $u^3$ appears in its equation.
\end{proof}
\begin{remark}
In \cite[2,2]{SC} Cynk calculates the defect $\delta$ and the Hodge numbers of the resolution of singularities of an exemplary sextic threefold $X\subset\mathbb{P}(1:1:1:1:4)$ with equation of the form $G=u^3+G_6$ where $V(G_6)$ is one of the sextic surfaces with $10$ triple points as described in \cite{EPS}.  In particular, he obtains $\delta=10$, $h^{1,1}(\tilde{X})=21$ and $h^{1,2}(\tilde{X})=3$ where $\tilde{X}$ is the resolution of singularities of $X$.
\end{remark}
\section{Calabi-Yau threefolds with no triple points}
We briefly discuss the complete intersection threefold $X_{2,2,2,2}\subset\mathbb{P}^7$. In a similar spirit we mention two hypersurfaces in weighted projective space that cannot admit triple points.
\begin{proposition}
A complete intersection threefold $X_{2,2,2,2}\subset\mathbb{P}^7$ cannot have triple points.
\end{proposition}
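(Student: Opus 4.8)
The plan is to control the multiplicity $e_O(X)$ of $X$ at an arbitrary point $O\in X$ and show that it is always a power of $2$, hence never equal to $3$. Since an ordinary triple point has multiplicity $3$ (its projectivised tangent cone is a smooth cubic surface, of degree $3$), this rules out triple points. First I would normalise the situation: put $O=[1:0:\cdots:0]$ and write the four defining quadrics as $Q_i=x_0L_i+C_i$, where $L_i$ is linear and $C_i$ quadratic in $x_1,\dots,x_7$ (the coefficient of $x_0^2$ vanishes because $O\in X$). Replacing the $Q_i$ by suitable linear combinations, I may assume that exactly $r:=\dim\langle L_1,\dots,L_4\rangle$ linear parts survive, say $L_1,\dots,L_r$ independent and $L_{r+1}=\cdots=L_4=0$, and I would choose coordinates so that $L_i=x_i$ for $i\le r$. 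Then $Q_1,\dots,Q_r$ are smooth at $O$ while $Q_{r+1},\dots,Q_4$ are quadric cones with vertex $O$.

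The projectivised tangent cone of $X$ at $O$ is a surface in $\mathbb{P}^6$ whose degree equals $e_O(X)$. It is contained in the intersection of the leading-form loci of the $Q_i$, namely in $\Lambda\cap V(\bar C_{r+1},\dots,\bar C_4)$, where $\Lambda=V(x_1,\dots,x_r)\cong\mathbb{P}^{6-r}$ and $\bar C_j:=C_j|_\Lambda$. The main computation is that, \emph{when the forms $\bar C_{r+1},\dots,\bar C_4$ form a regular sequence on $\Lambda$}, this intersection is a complete intersection of $4-r$ quadrics, of dimension $2$ and degree $2^{4-r}$; since the tangent cone is a two-dimensional subscheme of this irreducible surface it must coincide with it, so $e_O(X)=2^{4-r}\in\{1,2,4,8,16\}$. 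In each of these cases $e_O(X)\ne 3$, so $O$ cannot be a triple point. This already disposes of the generic behaviour across all values of $r$ (smooth point for $r=4$, node for $r=3$, and so on).

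The main obstacle will be the degenerate case, where the leading forms fail to be a regular sequence — concretely, when some cone equation satisfies $C_j\in(x_1,\dots,x_r)$, so that $\bar C_j=0$ and the naive leading-form intersection no longer cuts out a surface. Writing $C_j=\sum_{i\le r}x_iH_i$ with $H_i$ linear and using $x_i\equiv -C_i$ modulo the local ideal, one sees that a \emph{cubic} initial form $\sum_i C_iH_i$ enters the ideal of the tangent cone, so a priori the tangent cone can become a cubic surface. The crux is therefore to show that no such configuration is compatible with the standing hypothesis that $X$ is a threefold whose singularities are isolated ordinary triple points: I would analyse how $C_j\in(x_1,\dots,x_r)$ forces $V(Q_j)\supseteq V(x_1,\dots,x_r)$ to contain a large linear space, and track the effect of this on the singular locus of $X$, aiming to show that either the resulting cubic is non-smooth or the singular locus acquires positive dimension, in both cases contradicting the hypothesis. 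Pinning down this degeneration precisely — ruling out, for an admissible $X$, that the tangent cone can acquire a smooth cubic structure — is the delicate heart of the proof; by comparison the regular-sequence computation of the previous paragraph is routine.
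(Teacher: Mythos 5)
Your regular-sequence case is correct, but it is also the routine part, and it is essentially all that the paper's own (one-line) proof contains: the paper simply asserts that a local presentation $F_3=x=y=z=0$ ``cannot happen'' because the global generators have degree $2$ --- which is exactly the unproved claim that your degenerate case never occurs. So everything hinges on the case $r=3$, $C_4\in(x_1,x_2,x_3)$, and there your text offers only a plan (``I would analyse\dots aiming to show\dots''), not an argument. That is a genuine gap, and unfortunately it is not one that can be closed along the lines you propose, because the configuration you hope to exclude is realizable. Take
$Q_1=x_0x_1+x_4^2-x_7^2$, $Q_2=x_0x_2+x_5^2-x_7^2$, $Q_3=x_0x_3+x_6^2-x_7^2$, $Q_4=x_1x_4+x_2x_5+x_3x_6$.
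On the chart $x_0=1$ the first three equations eliminate $x_1,x_2,x_3$ exactly, and $X=V(Q_1,\dots,Q_4)$ becomes the hypersurface $x_4^3+x_5^3+x_6^3-(x_4+x_5+x_6)x_7^2=0$ in $\mathbb{C}^4$, i.e.\ the affine cone over the cubic surface $x^3+y^3+z^3-(x+y+z)w^2=0$, which is smooth (its partials $3x^2-w^2,\,3y^2-w^2,\,3z^2-w^2,\,2(x+y+z)w$ have no common projective zero, since $\pm1\pm1\pm1$ is odd). Also $X$ is an irreducible complete intersection threefold: $X\cap\{x_0=0\}$ is a finite union of $2$-planes, while every component of an intersection of four quadrics in $\mathbb{P}^7$ has dimension at least $3$, so $X$ is the closure of the affine cone above. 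Hence an irreducible $X_{2,2,2,2}$ \emph{can} have an ordinary triple point with smooth cubic tangent cone; no local computation at $O$ can yield the contradiction you are aiming for.

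The only remaining escape --- for you and for the paper --- is the global standing hypothesis that \emph{all} singularities are ordinary triple points: the example above is singular along the plane $x_0=x_4=\dots=x_7=0$ at infinity. But this route also appears to close. The triple point at $O$ depends only on the restrictions $\bar C_i,\bar\mu_i$ of the data to $\Lambda=V(x_1,x_2,x_3)$ (in your notation, the cubic tangent cone is $\sum_i\bar C_i\bar\mu_i$, where $Q_4=\sum_i x_i\mu_i$), so one may add to $C_1,C_2,C_3,\mu_1,\mu_2,\mu_3$ arbitrary terms lying in $(x_1,x_2,x_3)$ without disturbing the triple point; a Bertini-plus-dimension-count argument (the relevant degeneracy condition has codimension $\geq 4$ in these parameters, while only a $3$-fold of points must be avoided, and along the finitely many lines of $X\cap\Lambda$ smoothness off $O$ follows once $V(\bar C_1,\bar C_2,\bar C_3)$ consists of $8$ reduced points not meeting $V(\bar\mu_1,\bar\mu_2,\bar\mu_3)$) then makes the generic such perturbation smooth outside $O$. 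In other words, the proposition itself is almost certainly false, and at the very least neither your proposal nor the paper's proof establishes it: both reduce to the degenerate case, you flag it honestly and leave it open, and the paper overlooks it entirely.
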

\begin{proof}
We cannot obtain an ordinary triple point $O$ because the tangent cone of $X_{2,2,2,2}$ at $O$ would have to be a cone over a smooth cubic surface. This means that there could exist a change of variables such that locally around $O$ the equation of $X_{2,2,2,2}$ would be $F_3=x=y=z=0$. As all the sixfolds are of degree $2$, this cannot happen.
\end{proof}
\begin{proposition}
There can be no ordinary triple points on $X_8\subset\mathbb{P}{(1:1:1:1:4)}$ or $X_{10}\subset\mathbb{P}{(1:1:1:2:5)}$.
\end{proposition}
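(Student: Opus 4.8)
The plan is to exploit the fact that in both cases the top-weight variable $u$ can enter the defining equation only to degree two. For $X_8\subset\mathbb{P}(1:1:1:1:4)$ the variable $u$ has weight $4$ and $\deg X_8=8$, while for $X_{10}\subset\mathbb{P}(1:1:1:2:5)$ it has weight $5$ and $\deg X_{10}=10$; in each case $2\,\mathrm{wt}(u)$ equals the degree and $3\,\mathrm{wt}(u)$ exceeds it. Hence I would first write the equations as $F=u^2+uG_4(x,y,z,t)+G_8(x,y,z,t)$ and $F=u^2+uH_5(x,y,z,t)+H_{10}(x,y,z,t)$ respectively, where the $G_i,H_i$ are weighted-homogeneous of the indicated degrees. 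Exactly as in the sextic case, the pure term $u^2$ must occur with nonzero coefficient (normalised to $1$): otherwise $X$ passes through the vertex $[0:\dots:0:1]$ of the weighted projective space and inherits its quotient singularity, so that $X$ is not a Calabi--Yau threefold of the required kind.

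The heart of the argument is the observation that $\partial^2F/\partial u^2\equiv 2$, a nonzero constant, which I would use to force the multiplicity of $X$ at every smooth point of the ambient space to be at most two. Let $P$ be a candidate ordinary triple point. Since an ordinary triple point has a tangent cone that is a cone over a smooth cubic surface in $\mathbb{P}^3$, the ambient space must be smooth at $P$; in particular $P$ is not one of the finitely many quotient-singular points of the weighted projective space. I would then pass to an affine chart obtained by dehomogenising with respect to a coordinate \emph{other than} $u$, say $x_i=1$ with $x_i\neq u$. In such a chart $U:=u/x_i^{\,\mathrm{wt}(u)}$ is a genuine local coordinate and the local equation takes the form $U^2+U\tilde{G}+\tilde{H}$ with $\tilde{G},\tilde{H}$ independent of $U$. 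Expanding about $P$ and writing $U=U_0+U'$, the coefficient of $(U')^2$ in the degree-two part equals $1\neq 0$, so the quadratic part of the local expansion does not vanish and $\mathrm{mult}_P X\le 2$. Thus $P$ cannot be a triple point.

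It then remains to check that every point of $X$ which could conceivably be a triple point lies in such a chart. For $X_8$ every point other than the vertex $[0:0:0:0:1]$ has a weight-one coordinate nonzero, hence lies in one of the charts $x=1,\dots,t=1$ in which $U$ is free; and $X_8$ misses the vertex, since $F=1$ there. For $X_{10}$ the ambient space has exactly two singular points, $[0:0:0:1:0]$ (of type $\mathbb{Z}/2$) and $[0:0:0:0:1]$ (of type $\mathbb{Z}/5$); the latter is missed by $X_{10}$ (again $F=1$), while the former is a quotient-singular point and hence cannot carry an ordinary triple point. Every remaining point either has one of $x,y,z$ nonzero, and so lies in a chart $x=1$, $y=1$ or $z=1$, or else satisfies $x=y=z=0$ with $t\neq 0$; the latter are smooth points of the weighted projective space lying on the line $\{x=y=z=0\}$, and I would place them in the chart $t=1$, where $u$ is still a free coordinate so that the computation of the previous paragraph applies verbatim.

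The step I expect to be the main obstacle is precisely this last bookkeeping for $X_{10}$ on the stratum $\{x=y=z=0\}$. Here one must resist using the chart $u=1$ (in which $u$ is no longer free and the key second derivative is unavailable) and instead use $t=1$, checking that the residual $\mathbb{Z}/2$-action is free at the relevant points so that $u$ descends to a bona fide analytic coordinate on the quotient. Conceptually the whole proof hinges on $\deg_u F=2$; it is instructive to contrast this with the sextic $X_6\subset\mathbb{P}(1:1:1:1:2)$ of the previous section, where $3\,\mathrm{wt}(u)=6=\deg X_6$, the monomial $u^3$ is present, and triple points do occur.
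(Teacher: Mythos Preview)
Your proposal is correct and follows essentially the paper's own algebraic argument: the equation must contain $u^2$ with nonzero coefficient (else the threefold hits the WPS vertex), so $\partial^2F/\partial u^2$ is a nonzero constant and no point can have multiplicity $\ge 3$. The paper dispatches this in one line by moving the putative triple point to $[1:0:0:0:0]$, whereas you carry out the chart bookkeeping more carefully (in particular on the stratum $\{x=y=z=0\}$ for $X_{10}$); the paper also sketches a parallel geometric picture of $X_8$ and $X_{10}$ as double covers, which you omit but do not need.
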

\begin{proof}
Both $X_8$ and $X_{10}$ can be considered a double cover of $\mathbb{P}^3$ and $\mathbb{P}(1:1:1:2)$ via a certain Veronese embedding. The ambient weighted projective space can be seen as a cone with the hyperplane section being isomorphic to the $\mathbb{P}^3$ (respectively $\mathbb{P}(1:1:1:2)$) and $X_8$ (resp. $X_{10}$) being the double section. Any line passing through a triple point of a cover would have to be contained in $X_8$ (resp. $X_{10}$). As this line passes through the vertex of the ambient cone, that means that $X_8$ (resp. $X_{10}$) would have to pass through this point as well, providing an additional, much more complicated singularity on $X_8$ (resp. $X_{10}$). 

This can also be seen algebraically, because the equation of $X_8\subset \mathbb{P}{(1:1:1:1:4)}$ (resp. $X_{10}\subset\mathbb{P}{(1:1:1:2:5)}$) necessarily involves a term with the last variable $u$ in the second power (otherwise the singular point of WPS would be contained in the threefold). As we can choose coordinates so that the putative triple point is $O=[1:0:0:0:0]$, we see that locally the equation of $X_8$ (resp. $X_{10}$) begins with $u^2$, meaning $O$ cannot be a triple point, a contradiction.
\end{proof}
\subsection*{Acknowledgements}
The author wishes to thank G. Kapustka, M. Kapustka and S. Cynk for helpful discussions. The author is also grateful to the reviewers for insightful and highly useful comments.
The author is supported by the project Narodowe Centrum Nauki 2018/30/E/ST1/00530.

\end{document}